\documentclass[12pt]{amsart}
\usepackage{amssymb}
\usepackage{amscd}
\usepackage{amsxtra}
\usepackage{amsmath}
\usepackage[mathscr]{eucal}
\usepackage{tikz-cd, enumitem}
\usepackage{fullpage}

\theoremstyle{plain}
\newtheorem{thm}{Theorem}[section]
\newtheorem{lem}[thm]{Lemma}
\newtheorem{prop}[thm]{Proposition}

\theoremstyle{definition}
\newtheorem{defn}[thm]{Definition}

\newtheorem{ques}[thm]{Question}
\newtheorem{exmp}[thm]{Example}

\theoremstyle{remark}
\newtheorem*{rem}{Remark}


\newcommand{\N}{\mathbb{N}}

\newcommand{\Z}{\mathbb{Z}}

\newcommand{\R}{\mathbb{R}}

\newcommand{\Core}{\mathsf{Core}}

\newcommand{\SL}{\mathsf{SL}}

\newcommand{\stab}{\mathsf{Stab}}
\newcommand{\dep}{\mathsf{depth_{RF}}}

\title{Generalized Residual Finiteness of Groups}
\author{Nic Brody \and Kasia Jankiewicz }
\address{Department of Mathematics, University of California, Santa Cruz, CA 95064}
\email{nic@ucsc.edu}
\email{kasia@ucsc.edu}

\makeatletter
\@namedef{subjclassname@2020}{\textup{2020} Mathematics Subject Classification}
\makeatother

\subjclass[2020]{20E26, 20F65}
\keywords{residual finiteness}

\begin{document}

\begin{abstract}
	A countable group is residually finite if every nontrivial element can act nontrivially on a finite set. When a group fails to be residually finite, we might want to measure how drastically it fails - it could be that only finitely many conjugacy classes of elements fail to act nontrivially on a finite set, or it could be that the group has no nontrivial actions on finite sets whatsoever. We define a hierarchy of properties, and construct groups which become arbitrarily complicated in this sense.
\end{abstract}

\maketitle

\section{Introduction}
Many infinite discrete groups are known to be residually finite. For example, free groups, and more generally, by the theorem of Mal'cev \cite{Malcev40}, all finitely generated linear groups are residually finite. Other examples include all finitely generated nilpotent groups. A famous open problem of geometric group theory asks whether all Gromov-hyperbolic groups are residually finite \cite[Prob 1.15]{BestvinaProblemList}. Without the assumption of Gromov-hyperbolicity, there are also many examples of groups which are not residually finite. 

Of course if a group $G$ has no finite index subgroups at all, then $G$ is very far from being residually finite. This happens, for example, for the Higman group (see Example~\ref{exmp:Higman}) or any infinite simple group. 
However, there are also non residually finite groups that are very close to being residually finite, in the sense that the intersection of all finite-index subgroups is a finite nontrivial group (see Example~\ref{exmp:Deligne2}). We would like to distinguish between these possibilities.

We propose the notion of \emph{$\alpha$-residual finiteness} for arbitrary ordinal $\alpha$, which generalizes the notions of a finite group, and a residually finite group. For example, the Deligne group (defined in Example~\ref{exmp:Deligne1}), which is a non-residually finite extension of $\mathbb Z$ by a residually finite group, is $(\omega\cdot 2)$-residually finite, where $\omega$ is the order type of the natural numbers, and so $\omega\cdot 2$ is the order type corresponding to two copies of natural numbers listed one after the other. On the other hand, the Higman group, whose intersection of all finite index subgroups is an infinite simple group, is not $\alpha$-residually finite for any ordinal $\alpha$. For a precise definition of $\alpha$-residual finiteness, see Section~\ref{sec:alpha-rf}.

Our main result is a construction of the following examples.
\begin{thm}\label{thm:intro construction}
For every $n\in \mathbb Z$, where $n\geq 1$, there exists a finitely generated group $G_n$ which is $\omega\cdot n$-residually finite, but not $\omega\cdot(n-1)$-residually finite.
\end{thm}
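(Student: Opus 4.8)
The plan is to realize each $G_n$ as the top of a tower of group extensions generalizing the Deligne group of Example~\ref{exmp:Deligne1}, designed so that its \emph{iterated finite residual series} has length exactly $n$. Write $\mathrm{Res}(H)$ for the intersection of all finite-index subgroups of $H$, and set $R_0 = G_n$, $R_{k+1} = \mathrm{Res}(R_k)$, continuing $R_\lambda = \bigcap_{k<\lambda} R_k$ at limits. I would aim to produce $G_n$ together with a chain
\[
G_n = R_0 \supsetneq R_1 \supsetneq \cdots \supsetneq R_{n-1} \supsetneq R_n = 1
\]
in which every successive quotient $R_k/R_{k+1}$ is an infinite residually finite group and $R_{k+1} = \mathrm{Res}(R_k)$ holds on the nose. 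Since each infinite residually finite layer should consume exactly one block of order type $\omega$ in the $\alpha$-residual finiteness measure, such a tower would force $\dep(G_n) = \omega\cdot n$, which gives both required conclusions at once: $G_n$ is $\omega\cdot n$-residually finite, and the nontriviality of $R_{n-1}$ witnesses that the process has not terminated by stage $\omega\cdot(n-1)$, so $G_n$ is not $\omega\cdot(n-1)$-residually finite.

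The construction itself would be recursive. The Deligne group already supplies the case $n = 2$: it is a central extension $1 \to \Z \to D \to \Sp_{2g}(\Z) \to 1$ whose finite residual is the central $\Z$, so that $R_0 = D \supsetneq R_1 = \Z \supsetneq R_2 = 1$ realizes length $2$. For the inductive step I would take a group $G_{n-1}$ produced at the previous stage and build $G_n$ as an extension $1 \to G_{n-1} \to G_n \to Q \to 1$ with $Q$ an infinite residually finite group (for instance another arithmetic or free-by-something group), arranged so that $G_n$ is finitely generated and $\mathrm{Res}(G_n) = G_{n-1}$. Granting the latter, the inductive identity $R_k(G_{n-1}) = R_{k+1}(G_n)$ for $k\geq 0$ propagates the length-$n$ series up the tower.

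Two inclusions must be checked to pin down $\mathrm{Res}(G_n) = G_{n-1}$. The inclusion $\mathrm{Res}(G_n) \subseteq G_{n-1}$ is the soft direction: it only requires $Q \cong G_n/G_{n-1}$ to be residually finite, so that the finite quotients factoring through $Q$ already separate everything outside $G_{n-1}$. The reverse inclusion $G_{n-1} \subseteq \mathrm{Res}(G_n)$ — equivalently, that \emph{every} finite quotient of $G_n$ kills $G_{n-1}$ — is the crux, and I expect it to be the main obstacle. Although $G_{n-1}$ has many finite quotients of its own (its top layer is residually finite), the point is to choose the extension so that none of these extends to a finite quotient of the ambient $G_n$; this is precisely the Deligne phenomenon, where an extension class of infinite order together with the congruence subgroup property and Kazhdan's property (T) obstructs the relevant finite quotients.

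The difficulty is to reproduce this rigidity simultaneously at every level of an $n$-step tower rather than just once — in particular to verify that the obstruction does not degrade as layers accumulate, and that the normal subgroup adjoined at each stage remains infinite, so that every layer contributes a full $\omega$ and the depth lands exactly on $\omega\cdot n$ rather than on some smaller ordinal. Establishing this controlled collapse of all finite quotients at each stage, while keeping $G_n$ finitely generated, is the technical heart of the argument; the bookkeeping that relates the tower's length to the ordinal $\omega\cdot n$ should then be routine given the definition of $\dep$.
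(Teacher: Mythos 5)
Your top-level reduction is sound and is essentially the one the paper uses: if a finitely generated group has iterated finite residuals forming a chain $G_n = R_0 \supsetneq R_1 \supsetneq \cdots \supsetneq R_n = 1$ with each quotient $R_k/R_{k+1}$ infinite and residually finite, then the upper bound $\dep(G_n)\leq\omega\cdot n$ follows from Proposition~\ref{prop:extensions}, and the lower bound is exactly Proposition~\ref{prop:core} ($\dep(G)=\omega+\dep(\Core(G))$) applied inductively up the tower. The gap is that the proposal stops where the theorem actually has to be proved: no group $G_n$ is ever constructed. You correctly isolate the crux --- forcing $G_{n-1}\subseteq \mathrm{Res}(G_n)$, i.e.\ making \emph{every} finite quotient of $G_n$ kill $G_{n-1}$ --- and then leave it as an acknowledged obstacle. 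Moreover, the mechanism you propose for it cannot iterate. Deligne's obstruction concerns \emph{central} extensions: the kernel is the (abelian) fundamental group $\Z$ of the ambient Lie group, and the argument runs through an extension class of infinite order in degree-two cohomology with the congruence subgroup property. Centrality forces the kernel to be abelian, but in your tower the kernel at stage $n$ is $G_{n-1}$, which is non-abelian as soon as $n\geq 2$. So ``the Deligne phenomenon'' is structurally unavailable past the first step, and no substitute rigidity mechanism --- nor any reason a suitable extension with prescribed non-abelian kernel and finitely generated total group exists --- is offered.

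The paper sidesteps both difficulties with an elementary construction: fix $G$ finitely generated, residually finite, with \emph{finite abelianization}, and set $G_1=G$, $G_{i+1}=G_i\wr G$ (restricted wreath product). Crucially, it never computes $\Core(G_{i+1})$ on the nose; Lemma~\ref{lem:core} only sandwiches it, $[G_i,G_i]^{(G)}\leq \Core(G_{i+1})\leq G_i^{(G)}$, and the hard inclusion is proved by a commutator trick rather than rigidity: if $N\trianglelefteq G_{i+1}$ has finite index, pick $1\neq g_0\in\pi(N)$ with a lift $g_1\in N$; for $x,y$ in one base copy, $[x,y]$ and $[g_1xg_1^{-1},y]$ agree modulo $N$, and the latter is trivial because $g_1xg_1^{-1}$ lies in a disjoint copy of $G_i$, so $[x,y]\in N$. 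Finite abelianization then guarantees the two bounding subgroups have the same depth (via Proposition~\ref{prop:G^X} and invariance under finite index), so $\dep(\Core(G_{i+1}))=\dep(G_i)$ and Proposition~\ref{prop:core} gives $\dep(G_{i+1})=\omega+\dep(G_i)$. If you want to salvage your approach, the lesson is twofold: weaken the goal from identifying $\mathrm{Res}(G_n)$ exactly to controlling its depth between computable bounds, and obtain the collapse of finite quotients from commutation relations built into the group (wreath products) rather than from arithmetic rigidity, which does not propagate to non-abelian kernels.
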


We also give a characterization of the $\alpha$-residual finiteness in terms of actions on rooted $\alpha$-trees, which can be thought of as trees of depth $\alpha$. Informally, those are collections of vertex sets and edge sets indexed by ordinals $i\leq\alpha$, with edges joining vertices in sets whose indices differ by 1.
For limit ordinals, the vertex sets are defined as the limit sets of the preceding sets of vertices.
\begin{thm}\label{thm:intro characterization}
A group $G$ is $\alpha$-residually finite if and only if $G$ admits a simple action on a rooted $\alpha$-tree. 
\end{thm}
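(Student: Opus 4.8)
The plan is to prove both implications by transfinite induction on $\alpha$, matching the recursive structure of the two notions level by level. Unwinding the definition of Section~\ref{sec:alpha-rf}, I expect $\alpha$-residual finiteness of $G$ to be witnessed by a descending chain of normal subgroups $G = G_0 \supseteq G_1 \supseteq \cdots \supseteq G_i \supseteq \cdots$ indexed by $i \le \alpha$, in which each successor quotient $G_i/G_{i+1}$ is finite, each limit term is the intersection $G_\lambda = \bigcap_{i<\lambda} G_i$ of its predecessors, and the final term $G_\alpha$ is trivial; note that the $G_i$ need not have finite index in $G$ (e.g.\ the $\mathbb Z$ appearing at stage $\omega$ for the Deligne group), only finite index over their successors. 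Dually, a rooted $\alpha$-tree has a single root $V_0$, a vertex set $V_i$ at each level $i \le \alpha$ with edges recording a surjective parent map $V_{i+1} \to V_i$, and limit levels $V_\lambda = \varprojlim_{i<\lambda} V_i$; I take a \emph{simple} action to be a level- and root-preserving $G$-action such that the pointwise stabilizer of each level $V_i$ acts on $V_{i+1}$ through a finite group, and which is faithful on the deepest level $V_\alpha$. The two constructions below convert one datum into the other, and the correctness of the conversion is checked by induction on the level.

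For the forward direction, given a witnessing chain I would set $V_i = G/G_i$, take the edge from $xG_{i+1}$ to its image $xG_i$ under the natural projection (well defined since $G_{i+1}\subseteq G_i$), and let $G$ act by left translation. Since $V_0 = G/G = \{*\}$ this is rooted; since $G_i$ is normal one has $G_i = \ker\!\big(G \to \operatorname{Sym}(G/G_i)\big)$, so the pointwise stabilizer of $V_i$ is $G_i$ and it acts on $V_{i+1}=G/G_{i+1}$ through $G_i/G_{i+1}$, which is finite, giving simplicity at each successor; and an element acts trivially on $V_\alpha$ exactly when it lies in $\bigcap_{i\le\alpha}\ker\!\big(G\to\operatorname{Sym}(G/G_i)\big) = G_\alpha = \{1\}$, giving faithfulness. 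Conversely, from a simple action I would define $G_i = \ker\!\big(G \to \operatorname{Sym}(V_i)\big)$, the pointwise stabilizer of level $i$. These are normal and nested (fixing $V_{i+1}$ pointwise forces fixing its parents, using surjectivity of the parent map); the defining finiteness condition on the simple action says precisely that $G_i$ acts on $V_{i+1}$ through a finite group, i.e.\ that $G_i/G_{i+1}$ is finite; and $G_\alpha = \{1\}$ is exactly faithfulness on $V_\alpha$. This produces the required witnessing chain.

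The main obstacle, and the only place the induction does real work, is the limit case. For a limit ordinal $\lambda$ I must reconcile the tree-theoretic level $V_\lambda = \varprojlim_{i<\lambda} V_i$ with the group-theoretic intersection $G_\lambda = \bigcap_{i<\lambda} G_i$. In the forward direction the natural map $G/G_\lambda \to \varprojlim_{i<\lambda} G/G_i$ need not be surjective, so $V_\lambda$ may contain boundary points not arising from $G$; I would therefore define the action directly on the inverse limit and observe that faithfulness still reduces to the kernel computation, since an element fixes every coordinate if and only if it lies in $\bigcap_{i<\lambda} G_i$, making surjectivity irrelevant. In the backward direction I need that the pointwise stabilizer of $V_\lambda$ equals $\bigcap_{i<\lambda} G_i$: the inclusion of the intersection into $G_\lambda$ is immediate, while the reverse inclusion requires that the tree be pruned, so that the projection $V_\lambda \to V_i$ is onto and every vertex of $V_i$ lifts to a branch of $V_\lambda$; an element fixing all of $V_\lambda$ then fixes that branch, hence its $i$-th coordinate, hence every vertex of $V_i$. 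Ensuring this pruning, the nonemptiness of the inverse-limit levels, and the surjectivity of their projections is the crux; I expect it to follow from the surjectivity of the parent maps together with a standard inverse-limit argument, and it is the step I would write out most carefully.
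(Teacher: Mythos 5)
Your proposal is built on reconstructed definitions that differ from the paper's in ways that matter, and the resulting argument does not prove the theorem as the paper states it. In the paper, an $(\alpha,\kappa)$-residual chain consists of subgroups $\mathscr{C}_i(G)$ that are \emph{not} assumed normal in $G$ (only $[\mathscr{C}_i(G):\mathscr{C}_{i+1}(G)]<\kappa$ is required), and a \emph{simple} action is one in which every nontrivial element acts \emph{fixed-point freely} on the top level $V_\alpha$ --- not merely faithfully, and with no built-in hypothesis that pointwise level stabilizers have finite successor quotients, nor that edge maps are surjective. This hits your backward direction hardest: you set $G_i=\ker\bigl(G\to\operatorname{Sym}(V_i)\bigr)$ and need $[G_i:G_{i+1}]<\infty$, but you obtain that only because you wrote it into your definition of ``simple.'' Under the paper's definition nothing supplies it: the tree has fibers of size $<\kappa$, yet for $\alpha>\omega$ the level sets $V_i$ can be infinite, and $G_i/G_{i+1}$ embeds only into the infinite product $\prod_{v\in V_i}\operatorname{Sym}\bigl(E_i^{-1}(v)\bigr)$, so no finiteness follows; likewise your ``pruning'' and surjectivity requirements are not available. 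The paper's proof avoids all of this with a different device that is absent from your proposal: pick a single vertex $v_\alpha\in V_\alpha$, set $v_i=E_i^\alpha(v_\alpha)$, and take $\mathscr{C}_i(G)=\stab(v_i)$ --- stabilizers along one branch, not level kernels. Then orbit--stabilizer gives $[\mathscr{C}_i(G):\mathscr{C}_{i+1}(G)]\le|E_i^{-1}(v_i)|<\kappa$, fixed-point freeness gives $\mathscr{C}_\alpha(G)=\stab(v_\alpha)=\{1\}$ (mere faithfulness would only give $\bigcap_{v\in V_\alpha}\stab(v)=\{1\}$, which is not enough for this argument), and the limit condition is immediate because $v_\lambda$ is determined by its projections $(v_i)_{i<\lambda}$. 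These stabilizers are not normal, which is exactly why the paper never asks for normality.

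Your forward direction is the same construction in spirit as the paper's (coset tree, left translation, inverse limit at limit levels), and you correctly isolate the genuine subtlety: the boundary points of $V_\lambda=\varprojlim_{i<\lambda}V_i$ not arising from $G$. But your resolution --- ``faithfulness still reduces to the kernel computation'' --- only yields faithfulness, whereas the paper's simplicity requires that no nontrivial $g$ fix \emph{any} point of $V_\alpha$, boundary points included. What quietly rescues your version is your normality assumption: if each $G_i$ is normal, then $g$ fixes a boundary point $(h_iG_i)_{i<\lambda}$ if and only if $h_i^{-1}gh_i\in G_i$ for all $i$, if and only if $g\in\bigcap_{i<\lambda}G_i$. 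For non-normal chains, which the paper allows, this breaks, and boundary fixed points genuinely occur: in the infinite dihedral group one can choose a non-normal residual chain (index-$2^i$ dihedral subgroups whose reflection cosets track a $2$-adic non-integer) such that a fixed reflection is compatibly conjugated into every term, hence fixes a point of the inverse limit while acting freely on the image of $G$. So to prove the paper's forward direction one must either first replace the chain by a normal one (automatic for $\alpha=\omega$, not obviously possible in general) or argue differently; your kernel computation does not address this, although in fairness the paper's own write-up of that direction asserts the tree structure and never verifies simplicity at limit levels either.
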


This note is organized as follows. In Section~\ref{sec:ordinals} we recall some background on ordinals and cardinals. In Section~\ref{sec:alpha-rf} we give motivation and definition of $\alpha$-residual finiteness, and prove some properties of this notion. In the following Section~\ref{sec:trees} we include a discussion on $\alpha$-trees and prove Theorem~\ref{thm:intro characterization}.
Finally, in Section~\ref{sec:construction} we prove Theorem~\ref{thm:intro construction} 

\subsection*{Acknowledgements} 
We thank Martin Bridson and Marco Linton for their helpful comments.
The second author was supported by the NSF grants DMS-2203307 and DMS-2238198. 

\section{Background on ordinals and cardinals}\label{sec:ordinals}
We include basics on ordinals and cardinals. For more background see e.g.\ \cite{Halmos74}.
\subsection{Ordinals} An \emph{ordered set} $(X,\leq)$ consists of a set $X$, and a binary relation $\leq$, which is reflexive, anti-symmetric, and transitive. An ordered set $(X,\leq)$ is \emph{well-ordered}, if for any $a,b\in X$ either $a\leq b$ or $b\leq a$, and every non-empty subset of $X$ has a least element with respect to $\leq$.

Let $(X, \leq)$ and $(Y,\leq)$ be two ordered sets. A function $f:X\to Y$ is \emph{monotonic} if for every $a,b \in X$ such that $a\leq b$, we have $f(a)\leq f(b)$. An \emph{order isomorphism} is a monotonic bijection whose inverse is also monotonic. We say $(X, \leq)$ and $(Y,\leq)$ have the same \emph{order type} if there exists an order isomorphism between $X$ and $Y$. We note that having the same order type is an equivalence relation. 

An \emph{ordinal} is the order type of a well-ordered set.
The ordinal $\omega$ is the order type of the natural numbers with the standard order $\leq$. Every natural number $n$ is the order type of the set $\{1, 2,\dots, n\}$ with the standard order.

\subsection{Ordinal arithmetic}
The arithmetic operation of addition and multiplication can be defined for ordinals. Let $\alpha, \beta$ be two ordinals. We define $\alpha+\beta$ to be the ordinal whose underlying set is the disjoint union of those of $\alpha$ and $\beta$, and the order is extended so that each element of $\alpha$ is less than each element of $\beta$.
We let $\alpha\cdot \beta$ be the ordinal whose underlying set is the product of those for $\alpha$ and $\beta$, and the order is \emph{reverse} lexicographic, so that $(a_1,b_1)<(a_2,b_2)$ if and only if $b_1<b_2$ or $b_1=b_2$ and $a_1<a_2$.

For example, the ordinal $\omega + \omega$ corresponds to the order type of two copies of the natural numbers, where each number in the first copy is smaller than each number in the second copy. This is the same ordinal as $\omega \cdot 2$.

We note that neither addition nor multiplication are commutative. For example, $1+\omega = \omega$, but $\omega+1$ is the order type of the ordered set $(\mathbb N \cup \{\infty\}, \leq)$ where the order on $\mathbb N$ is standard, and $n\leq \infty$ for every $n\in \mathbb N$. In particular, the order type $\omega+1$ contains a largest element, while the order type $\omega$ does not.
Similarly, $\omega\cdot 2 \neq 2\cdot \omega$.

An ordinal $\alpha$ is a \emph{successor} of $\beta$ if $\alpha$ is the smallest ordinal greater than $\beta$, i.e.\ $\alpha = \beta +1$. 
A \emph{limit ordinal} is a non-zero ordinal that is not a successor ordinal. 
Equivalently, $\alpha$ is a limit ordinal if there exists $\beta$ such that $\beta<\alpha$, and for every such $\beta$, there exists an ordinal $\gamma$ such that $\beta<\gamma<\alpha$.
Note that, in particular, every successor ordinal is of the form $\alpha + n$ for some limit ordinal $\alpha$ and $n>0$.

\subsection{Cardinals}
A \emph{cardinal} is a set considered up to bijection. There is a natural association of a cardinal to each ordinal, by taking any set of given order type and consider it up to bijection. Finite ordinals are in one to one correspondence with finite cardinals, and both can be identified with natural numbers. Among infinite ordinals, there are many ordinals that correspond to the same cardinal; ordinals $\omega$, $\omega+1$ and $\omega \cdot 2$ all correspond to the cardinal $\aleph_0$.

\section{$(\alpha,\kappa)$-residual finiteness}\label{sec:alpha-rf}
We recall that a countable group $G$ is \emph{residually finite} if for every nontrivial $g\in G$, there exists a finite index subgroup $H\subseteq G$ such that $g\notin H$. Equivalently, $G$ is residually finite if for every nontrivial $g\in G$ there exists a finite quotient $\phi:G\to Q$ such that $\phi(g)\neq 1$.
The \emph{residual finiteness core} $\Core(G)$ of a countable group $G$, is the intersection of all finite index subgroups of $G$. A group $G$ is residually finite if and only if $\Core(G) = \{1\}$.

\subsection{Motivation}
We start with reviewing some examples of non-residually finite groups.
\begin{exmp}[\cite{Higman}]\label{exmp:Higman}
The Higman group is given by the presentation
\[H = \langle a,b,c,d\mid a^{-1}ba=b^{2},\quad b^{-1}cb=c^{2},\quad c^{-1}dc=d^{2},\quad d^{-1}ad=a^{2}\rangle.\]
It is an infinite group that admits no finite quotients. 
In particular, $\Core(H) = H$. 
\end{exmp}

\begin{exmp}[\cite{Deligne78}, see also \cite{morris2009lattice}]\label{exmp:Deligne1}
Note that the fundamental group of $Sp_{2n}(\R)$ is $\Z$. There exists a finite index subgroup $G\subseteq Sp_{2n}(\mathbb Z)$ such that the preimage $\widetilde G$ of $G$ in the universal cover $\widetilde{Sp_{2n}(\mathbb R)}$ of $Sp_{2n}(\mathbb R)$ is a central extension
\[
1\to \mathbb Z\to \widetilde G\to G\to 1.
\]
Moreover every finite index subgroup of $\widetilde G$ contains the kernel $\mathbb Z$, and in fact $\Core(G)$ is equal to the index two subgroup $2\mathbb Z$ of the kernel $\mathbb Z$. In particular, $\widetilde G$ is not residually finite, but \{residually finite\}-by-\{residually finite\}.
\end{exmp}

\begin{exmp}\label{exmp:Deligne2}
If instead of lifting $G$ to the universal cover, we lift to a finite cover of degree $k\geq 3$, we obtain a central extension of the form

\[1 \to \mathbb Z/k\mathbb Z\to \widehat G\to G \to 1.\]
The group $\widehat G$ is not residually finite, but finite-by-\{residually finite\}.
 \end{exmp}

Our goal is to distinguish the above groups using a finer notion than residual finiteness, which we define in the next section.

\subsection{Definition}
\begin{defn}\label{defn:alpha rf}
	Let $\alpha$ be an ordinal and $\kappa$ a cardinal. A group $G$ is called \emph{$(\alpha,\kappa)$-residually finite} if there exists an $\alpha$-indexed chain $\{\mathscr{C}_i(G)\}_{i\leq\alpha}$ of subgroups of $G$ so that 
    \begin{enumerate}[label=(\roman*)]
        \item $\mathscr{C}_0(G) = G$, and $\mathscr{C}_\alpha(G)=\{1\}$,
        \item $[\mathscr{C}_i(G):\mathscr{C}_{i+1}(G)]<\kappa$ for all $i<\alpha$,
        \item $\mathscr{C}_\lambda(G) =\bigcap_{i<\lambda}\mathscr{C}_i(G)$ for limit ordinals $\lambda \leq \alpha$.
    \end{enumerate}
\end{defn}

When $(\alpha,\kappa)=(\omega,\aleph_0)$, we recover the standard notion of residual finiteness.
Groups which are $(\alpha,\aleph_0)$-residual will be called \emph{$\alpha$-residually finite}. 
We note that if $G$ is $\alpha$-residually finite, then $G$ is $\beta$-residually finite for every $\beta>\alpha$.

Similarly, if $G$ is $(\alpha,\kappa_1)$-residually finite, then $G$ is $(\alpha,\kappa_2)$-residually finite for every $\kappa_2>\kappa_1$. Moreover, every group $G$ is $(1,\kappa)$-residually finite for every $\kappa > |G|$. However, if $p$ is prime and $k<p$, $\Z/p\Z$ is not $(\alpha,k)$-residually finite for any $\alpha$.

\begin{defn}
    We say the \emph{residual finiteness depth} $\dep(G)=\alpha$ if $G$ is $\alpha$-residually finite, but not $\beta$-residually finite for any $\beta<\alpha$
\end{defn}

While our definition does not appear in the literature, Marco Linton informed us of the following result of Baumslag \cite{Baumslag71}. In our terminology, this result can be phrased as $\omega\cdot n$-residual finiteness of all positive one-relator groups (i.e.\ where the relator is a positive word). Baumslag also conjectured that all one-relator groups are $\omega\cdot n$-residually finite for some $n$ \cite{Baumslag74}.

\subsection{Properties}
The notion of $\alpha$-residual finiteness is not very interesting for finite ordinals. Indeed, for every $n\in \mathbb N$ such that $n\geq 1$ a group $G$ is $n$-residually finite if and only if $G$ is finite. More generally, we have the following.

\begin{prop}\label{prop:limit or successor}
    If $\dep(G)=\alpha$, then $\alpha$ is 0, 1, a limit ordinal, or the successor of a limit ordinal.
\end{prop}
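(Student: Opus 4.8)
The plan is to argue by contraposition, using the classification of ordinals recalled in Section~\ref{sec:ordinals}. An ordinal fails to be $0$, $1$, a limit ordinal, or the successor of a limit ordinal precisely when it can be written as $\gamma+2$ for some ordinal $\gamma$ (equivalently, when it is the successor of a successor): indeed, writing a successor as $\beta+1$, the three allowed successor cases $\beta=0$, $\beta$ a limit ordinal, and the limit/zero cases exhaust everything except $\beta$ itself being a successor $\gamma+1$. So it suffices to show that no group $G$ has $\dep(G)=\gamma+2$, and I will do this by showing that any $(\gamma+2)$-residually finite group is already $(\gamma+1)$-residually finite, contradicting the minimality in the definition of depth.

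The key observation is that two consecutive finite-index steps can be merged into one. Suppose $G$ is $(\gamma+2)$-residually finite, witnessed by a chain $\{\mathscr{C}_i(G)\}_{i\leq\gamma+2}$ as in Definition~\ref{defn:alpha rf}. Its final three terms satisfy $\mathscr{C}_\gamma(G)\supseteq\mathscr{C}_{\gamma+1}(G)\supseteq\mathscr{C}_{\gamma+2}(G)=\{1\}$, with both indices $[\mathscr{C}_\gamma(G):\mathscr{C}_{\gamma+1}(G)]$ and $[\mathscr{C}_{\gamma+1}(G):\{1\}]$ finite by condition (ii). The latter says $\mathscr{C}_{\gamma+1}(G)$ is finite, and since a finite-index subgroup over a finite group forces the ambient group to be finite, $\mathscr{C}_\gamma(G)$ is finite as well; in particular $[\mathscr{C}_\gamma(G):\{1\}]=|\mathscr{C}_\gamma(G)|<\aleph_0$.

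I would then define a truncated chain $\{\mathscr{C}'_i(G)\}_{i\leq\gamma+1}$ by setting $\mathscr{C}'_i(G)=\mathscr{C}_i(G)$ for $i\leq\gamma$ and $\mathscr{C}'_{\gamma+1}(G)=\{1\}$, and check that it witnesses $(\gamma+1)$-residual finiteness. Conditions (i) and (ii) are immediate, the only new index being the finite $[\mathscr{C}_\gamma(G):\{1\}]$ computed above. For condition (iii), I would note that since $\gamma+1$ is a successor ordinal, every limit ordinal $\lambda\leq\gamma+1$ in fact satisfies $\lambda\leq\gamma$, so the relevant terms and all their predecessors are untouched and the intersection condition is inherited directly from the original chain. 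This yields $(\gamma+1)$-residual finiteness and the desired contradiction.

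The argument is short, and the points requiring care are bookkeeping rather than genuine obstacles: first, correctly identifying the excluded ordinals as exactly those of the form $\gamma+2$; and second, verifying that truncation preserves the limit condition (iii). The latter is exactly where the dichotomy lives: in the forbidden case the penultimate term $\mathscr{C}_\gamma(G)$ sits at a successor stage and is reached by a single finite-index step, so it can be collapsed, whereas in the allowed case $\alpha=\lambda+1$ the penultimate term $\mathscr{C}_\lambda(G)$ is forced to be an intersection and admits no such collapse.
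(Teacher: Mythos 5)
Your proof is correct and takes essentially the same approach as the paper's: both rest on the observation that consecutive finite-index steps at the tail of a residual chain can be merged into one, and that truncating the tail does not disturb the limit-ordinal intersection condition (iii). The only cosmetic difference is that you collapse a single step (showing $(\gamma+2)$-residual finiteness implies $(\gamma+1)$-residual finiteness and invoking minimality of $\dep$), whereas the paper writes the forbidden ordinal as $\lambda+n$ with $\lambda$ a limit ordinal and $n\geq 2$ and collapses all $n$ finite steps down to $\lambda+1$ at once.
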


\begin{proof} Any successor ordinal can be expressed as $\alpha + n$ for some $n\geq 1$, and some limit ordinal $\alpha$. Suppose that $G$ is $(\alpha+n)$-residually finite for some $n\geq 2$ and sone limit ordinal $\alpha$, and let $\{\mathscr{C}_{i}(G)\}_{i\leq \alpha+n}$ be its $\alpha+n$ index chain provided by the definition. 
That means that $\mathscr{C}_{\alpha+n}(G) =\{1\}$  and $[\mathscr{C}_{\alpha+n-1}(G): \mathscr{C}_{\alpha+n}(G)]< \infty$,\dots, $[\mathscr{C}_{\alpha}(G): \mathscr{C}_{\alpha+1}(G)]< \infty$, hence $[\mathscr{C}_{\alpha}(G): \mathscr{C}_{\alpha+n}(G)]< \infty$. In particular the chain $\{\mathscr{C}'_{i}(G)\}_{i\leq \alpha+1}$ where $\mathscr{C}'_{i}(G) = \mathscr{C}_{i}(G)$ for $i\leq \alpha$, and $\mathscr{C}'_{\alpha+1}(G) = \mathscr{C}_{\alpha+n}(G) = \{1\}$ is an $(\alpha+1, \aleph_0)$-residual chain for $G$ as in Definition~\ref{defn:alpha rf}. Thus $G$ is $(\alpha+1)$-residually finite.
\end{proof}

\begin{exmp}
    We have $\dep(G)=0$ if and only if $G$ is the trivial group, and $\dep(G) = 1$ if and only if $G$ is a nontrivial finite group. A group $G$ with $\dep(G) = \omega + 1$ is finite-by-residually finite.
\end{exmp}
More generally, we have the following.
\begin{prop}\label{prop:extensions}
	Suppose that $$1 \to N\to G\xrightarrow{\pi} Q\to 1$$ is a short exact sequence of groups where $Q$ is $(\alpha_1,\kappa_1)$-residual and $N$ is $(\alpha_2,\kappa_2)$-residually finite. Then $G$ is $(\alpha_1+\alpha_2,\max\{\kappa_1,\kappa_2\})$-residually finite.
	
	\begin{proof}
		Let $\{\mathscr{C}_i(Q)\}_{i\leq \alpha_1}$, $\{\mathscr{C}_i(N)\}_{i\leq \alpha_2}$ be $(\alpha_1,\kappa_1)$- and $(\alpha_2,\kappa_2)$-residual chains for $Q$ and $N$ respectively. For $i\leq \alpha_1+\alpha_2$ set $\mathscr{C}_i(G)=\begin{cases} \pi^{-1}(\mathscr{C}_i(Q)) &\text{ for }i\leq \alpha_1 \\ \mathscr{C}_{i-\alpha_1}(N) &\text{ for }i> \alpha_1\end{cases}$
		
		Then $\mathscr{C}_{\alpha_1}(G) = \bigcap_{i<\alpha_1}\pi^{-1}(\mathscr{C}_i(Q))=\pi^{-1}(\bigcap_{i<\alpha_1}\mathscr{C}_i(Q))=\pi^{-1}(1_Q)=N $. 
        Thus $\{\mathscr{C}_{i}(G)\}_{i\leq \alpha_1+\alpha_2}$ is an $(\alpha_1+\alpha_2, \max\{\kappa_1, \kappa_2\})-$residual chain for $G$.
	\end{proof}
\end{prop}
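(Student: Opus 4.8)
The plan is to produce a chain of length $\alpha_1 + \alpha_2$ for $G$ by concatenating a pullback of the chain for $Q$ with the chain for $N$. Explicitly, for $i \le \alpha_1$ I would set $\mathscr{C}_i(G) = \pi^{-1}(\mathscr{C}_i(Q))$, and for $\alpha_1 < i \le \alpha_1 + \alpha_2$ I would write $i = \alpha_1 + j$ (using that left ordinal subtraction by $\alpha_1$ is well-defined whenever $i \ge \alpha_1$) and set $\mathscr{C}_i(G) = \mathscr{C}_j(N)$. The two pieces are engineered to agree at the seam: since $\mathscr{C}_{\alpha_1}(Q) = \{1\}$, the pullback gives $\mathscr{C}_{\alpha_1}(G) = \pi^{-1}(\{1\}) = N = \mathscr{C}_0(N)$, so the $N$-chain resumes exactly where the pulled-back $Q$-chain ends.

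First I would record two elementary facts. Since $\pi$ is surjective with kernel $N$, for any subgroups $B \le A \le Q$ one has $[\pi^{-1}(A) : \pi^{-1}(B)] = [A:B]$, because $\pi$ induces a bijection of coset spaces $\pi^{-1}(A)/\pi^{-1}(B) \to A/B$; and preimages commute with arbitrary intersections, $\pi^{-1}\left(\bigcap_i A_i\right) = \bigcap_i \pi^{-1}(A_i)$, which is purely set-theoretic. I would also check that the assembled chain is genuinely nested: below $\alpha_1$ this follows from the $Q$-chain being decreasing and monotonicity of $\pi^{-1}$, at the seam from $\mathscr{C}_1(N) \subseteq N$, and above $\alpha_1$ from the $N$-chain being decreasing. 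Conditions (i) and (ii) are then immediate: $\mathscr{C}_0(G) = \pi^{-1}(Q) = G$ and $\mathscr{C}_{\alpha_1+\alpha_2}(G) = \mathscr{C}_{\alpha_2}(N) = \{1\}$; a successor step below $\alpha_1$ has index $< \kappa_1$ by the first fact, while a successor step at or above $\alpha_1$ is computed inside $N$ and has index $< \kappa_2$, so every index is $< \max\{\kappa_1, \kappa_2\}$.

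The one step needing genuine care is condition (iii) at a limit ordinal $\lambda \le \alpha_1 + \alpha_2$, which I would split into two cases. If $\lambda \le \alpha_1$, then every $i < \lambda$ lies in the pullback regime, so $\bigcap_{i<\lambda}\mathscr{C}_i(G) = \pi^{-1}\left(\bigcap_{i<\lambda}\mathscr{C}_i(Q)\right) = \pi^{-1}(\mathscr{C}_\lambda(Q)) = \mathscr{C}_\lambda(G)$, using the second fact together with condition (iii) for $Q$. If $\lambda > \alpha_1$, I would first note that writing $\lambda = \alpha_1 + \mu$ forces $\mu$ to be a limit ordinal, since $\mu = \nu+1$ would make $\lambda = (\alpha_1+\nu)+1$ a successor. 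Because the chain is decreasing with every term of index $< \alpha_1$ containing $N$ and every term of index $\ge \alpha_1$ contained in $N$, the indices below $\alpha_1$ contribute nothing, so $\bigcap_{i<\lambda}\mathscr{C}_i(G) = \bigcap_{\alpha_1 \le i < \lambda}\mathscr{C}_i(G) = \bigcap_{j<\mu}\mathscr{C}_j(N) = \mathscr{C}_\mu(N) = \mathscr{C}_\lambda(G)$, now invoking condition (iii) for $N$.

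I expect the main obstacle to be organizational rather than mathematical: keeping the bookkeeping at the seam $i = \alpha_1$ consistent, confirming that $i \mapsto i - \alpha_1$ is well-defined and that the decomposition $\lambda = \alpha_1 + \mu$ preserves limit-ness, and ensuring the index identity is only invoked where $\pi$ is surjective. None of these is a real difficulty, so the conclusion should follow once the chain is assembled and the two elementary facts are in place.
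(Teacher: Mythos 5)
Your proposal is correct and follows essentially the same route as the paper's proof: pull back the $Q$-chain under $\pi$, concatenate with the $N$-chain, and verify the seam at $\alpha_1$ and the limit-ordinal condition. The paper's version is terser (it only checks the seam explicitly), while you additionally spell out the index identity, nestedness, and the two limit cases — all of which are correct and exactly the details the paper leaves implicit.
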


\begin{prop}\label{prop:core}
    Let $G$ be a group such that $\Core(G)$ has $\dep(\Core(G)))=\alpha>0$. Then $\dep(G) = \omega+\alpha$.
\end{prop}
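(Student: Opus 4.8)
The plan is to establish the two inequalities $\dep(G)\le\omega+\alpha$ and $\dep(G)\ge\omega+\alpha$ separately. Throughout I use that $\alpha>0$ forces $\Core(G)\neq\{1\}$ (the trivial group has depth $0$), so $G$ is infinite and in particular not residually finite; hence any residual chain for $G$ must have length at least $\omega$.

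For the upper bound I would build an $(\omega+\alpha,\aleph_0)$-residual chain for $G$ in two stages. For the first $\omega$ steps I use that $G$ is countable: enumerate $G\setminus\Core(G)=\{g_1,g_2,\dots\}$, and for each $k$ choose a finite-index subgroup $H_k\le G$ with $g_k\notin H_k$, which exists by definition of $\Core(G)$. Setting $\mathscr{C}_0(G)=G$ and $\mathscr{C}_n(G)=H_1\cap\dots\cap H_n$ gives a descending chain of finite-index subgroups with $\bigcap_{n<\omega}\mathscr{C}_n(G)=\Core(G)$, so I put $\mathscr{C}_\omega(G)=\Core(G)$. For the remaining $\alpha$ steps I splice in an $\alpha$-residual chain $\{\mathscr{D}_j\}_{j\le\alpha}$ for $\Core(G)$, which exists because $\dep(\Core(G))=\alpha$, by setting $\mathscr{C}_{\omega+j}(G)=\mathscr{D}_j$. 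The successor-index conditions are immediate from the two input chains; the only points requiring care are the limit conditions, which at $\omega$ hold by construction and at an ordinal $\omega+\lambda$ (automatically a limit when $\lambda$ is) reduce to $\Core(G)\cap\bigcap_{j<\lambda}\mathscr{D}_j=\bigcap_{j<\lambda}\mathscr{D}_j$, using $\mathscr{D}_j\subseteq\Core(G)$.

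For the lower bound, suppose $G$ is $\beta$-residually finite with chain $\{\mathscr{C}_i(G)\}_{i\le\beta}$; I must show $\beta\ge\omega+\alpha$. As noted $\beta\ge\omega$, so by ordinal subtraction there is a unique $\gamma$ with $\beta=\omega+\gamma$, and it suffices to prove $\gamma\ge\alpha$. Each $\mathscr{C}_n(G)$ with $n<\omega$ has finite index in $G$ (a finite product of finite indices), hence contains $\Core(G)$; therefore $\Core(G)\subseteq\mathscr{C}_\omega(G)$. I then restrict the tail of the chain to the core: set $\mathscr{E}_j=\mathscr{C}_{\omega+j}(G)\cap\Core(G)$ for $0\le j\le\gamma$. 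This is a $\gamma$-residual chain for $\Core(G)$: indeed $\mathscr{E}_0=\Core(G)$ and $\mathscr{E}_\gamma=\{1\}$; the successor indices are finite because $[\mathscr{E}_j:\mathscr{E}_{j+1}]\le[\mathscr{C}_{\omega+j}(G):\mathscr{C}_{\omega+j+1}(G)]$ by the standard inequality $[H:H\cap K]\le[G:K]$; and the limit conditions transfer from those of $\{\mathscr{C}_i(G)\}$ together with $\Core(G)\subseteq\mathscr{C}_\omega(G)$. Hence $\Core(G)$ is $\gamma$-residually finite, which forces $\gamma\ge\alpha$ since $\dep(\Core(G))=\alpha$, giving $\beta=\omega+\gamma\ge\omega+\alpha$.

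I expect the main obstacle to be the first stage of the upper bound, namely realizing $\Core(G)$ as a countable descending intersection of finite-index subgroups; this is exactly where countability of $G$ is essential, since in general the intersection of all finite-index subgroups need not be reached by any $\omega$-indexed chain. The remaining work is bookkeeping: matching the splice point $\mathscr{C}_\omega(G)=\Core(G)$ on both sides and checking that intersection with $\Core(G)$ commutes with passage to limits, for which the containment $\Core(G)\subseteq\mathscr{C}_\omega(G)$ is the key ingredient.
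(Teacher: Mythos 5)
Your proof is correct and follows essentially the same route as the paper: for the upper bound you splice an $\omega$-chain descending from $G$ to $\Core(G)$ with an $\alpha$-chain for $\Core(G)$ (the paper packages this step as Proposition~\ref{prop:extensions} applied to $1\to\Core(G)\to G\to G/\Core(G)\to 1$, using that $G/\Core(G)$ is an infinite residually finite group, whereas you build the $\omega$-chain directly from countability of $G$), and for the lower bound you write any witnessing ordinal as $\omega+\gamma$ and restrict the tail of the chain to $\Core(G)\subseteq\mathscr{C}_\omega(G)$, exactly as the paper does. Your explicit chain $\mathscr{E}_j=\mathscr{C}_{\omega+j}(G)\cap\Core(G)$ merely makes precise a step the paper leaves implicit, namely passing from $\beta$-residual finiteness of $\mathscr{C}_\omega(G)$ to that of its subgroup $\Core(G)$.
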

\begin{proof}
   Note that if $\Core(G)$ is $\alpha$-residually finite for some ordinal $\alpha>0$, then necessarily $[G:\Core(G)]=\infty$. Indeed, if $[G:\Core(G)]<\infty$, then any finite index subgroup of $\Core(G)$, which exists by the assumption that $\Core(G)$ is $\alpha$-residually finite, would also have finite index in $G$, contradiction the definition of $\Core(G)$. Thus, $G/\Core(G)$ is an infinite residually finite group. By Proposition~\ref{prop:extensions} $G$ is $\omega+\alpha$-residually finite. It remains to prove that $\dep(G)$ is not less that $\omega+\alpha$.

   Since $G$ surjects onto an infinite group, clearly $G$ is infinite, so $\dep(G)\geq \omega$. Suppose that $G$ is $\omega+\beta$-residually finite for some ordinal $\beta$, and let $\{\mathscr{C}_i(G)\}_{i\leq \alpha}$ be the $\omega+\beta$-indexed chain witnessing the $\omega+\beta$-residual finiteness of $G$. Since $\mathscr{C}_{\omega}(G)$ is an intersection of finite index subgroups of $G$ we have $\mathscr{C}_{\omega}(G)\supseteq \Core(G)$. By construction, this $\omega+\beta$-indexed chain also provides a $\beta$-indexed chain witnessing $\beta$-residual finiteness of $\mathscr{C}_{\omega}(G)$. In particular, this proves that $\Core(G)$ is $\beta$-residually finite. 

   It follows that $\dep(G) = \omega+\alpha$ as claimed.
\end{proof}

\begin{rem}
    We emphasize that $\omega+\alpha$ might be equal to $\alpha$. Indeed, this is the case precisely when $\alpha\geq \omega^\omega$.
\end{rem}

In the next proposition, $G^X$ denotes the group of function from the set $X$ to the group $G$, with the group operation defined coordinate-wise. By $G^{(X)}$ we denote the subgroup of the functions with finite support.

\begin{prop}\label{prop:G^X}
	If $G$ is $\alpha$-residually finite, 
  and $X$ is a countable set, $G^X$ is $\alpha$-residually finite. In particular, if $G$ is infinite, $\dep(G)=\dep(G^X)=\dep(G^{(X)})$.
 
	\begin{proof}
		Let $\{\mathscr{C}_i(G)\}_{i\leq \alpha}$ be a residual chain for $G$, and $\{x_0,x_1,x_2,x_3,\dots\}$ an enumeration of $X$. Then consider the chain $H_n=\{f\colon X\to G\mid f(x_i)\in \mathscr{C}_{n-i}(G) \text{ for } i<n\}$. This has index $[G:\mathscr{C}_1(G)][G:\mathscr{C}_2(G)]\dots [G:\mathscr{C}_n(G)]<\infty$, and the intersection of all $H_n$ is trivial.  

      The second statement follows because $G\leq G^{(X)}\leq G^X$.
	\end{proof}
\end{prop}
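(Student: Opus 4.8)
\emph{Proof proposal.} The plan is to first reduce the ``in particular'' clause to the main assertion, and then to build an $\alpha$-residual chain for $G^X$ by a staggered (``dovetailed'') version of the diagonal product. First I would record that $(\alpha,\kappa)$-residual finiteness passes to subgroups: if $H\le G$ and $\{\mathscr{C}_i(G)\}_{i\le\alpha}$ is a residual chain for $G$, then $\mathscr{C}_i(H):=H\cap\mathscr{C}_i(G)$ is a residual chain for $H$, since $\mathscr{C}_0(H)=H$, $\mathscr{C}_\alpha(H)=\{1\}$, the limit condition is preserved by intersecting with $H$, and $[\mathscr{C}_i(H):\mathscr{C}_{i+1}(H)]\le[\mathscr{C}_i(G):\mathscr{C}_{i+1}(G)]<\kappa$. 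Thus $\dep$ is monotone under inclusion, and applying this to $G\le G^{(X)}\le G^X$ gives $\dep(G)\le\dep(G^{(X)})\le\dep(G^X)$. Hence once we know $G^X$ is $\alpha$-residually finite with $\alpha=\dep(G)$, the equalities $\dep(G)=\dep(G^{(X)})=\dep(G^X)$ follow. It therefore suffices to produce a length-$\alpha$ residual chain for $G^X$.

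The naive candidate is the diagonal $\mathscr{C}_i(G)^X$. It has the right endpoints, and because $\bigl(\bigcap_{i<\lambda}\mathscr{C}_i(G)\bigr)^X=\bigcap_{i<\lambda}\mathscr{C}_i(G)^X$ it respects the limit condition automatically. The obstacle is the successor condition: $[\mathscr{C}_i(G)^X:\mathscr{C}_{i+1}(G)^X]=[\mathscr{C}_i(G):\mathscr{C}_{i+1}(G)]^{|X|}$, which is infinite as soon as $X$ is infinite and the step is nontrivial. The fix is to spend the finitely many available ``units of index'' at each successor stage on only finitely many coordinates. Fixing an enumeration $X=\{x_0,x_1,\dots\}$, I would let the coordinate $x_n$ advance through the chain for $G$ with an $n$-step delay, as in the $\alpha=\omega$ construction; then each successor stage moves only finitely many coordinates one level down, so its index is a finite product of finite indices, while the forced limit stages perform the infinite descents of the chain for $G$ for free.

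To reach a general $\alpha$ I would run this staggering block-by-block along the decomposition of $[0,\alpha)$ into blocks of order type $\omega$: within each block the enumeration delay drives every coordinate from its level at the start of the block down to the limit level at the end of the block, and the whole chain is assembled by transfinite recursion on the limit stages of $\alpha$. The step I expect to be the main obstacle is verifying simultaneously that every successor stage has finite index and that $\mathscr{C}_\alpha(G^X)=\{1\}$, i.e. that the staggered delays all ``wash out'' by stage $\alpha$. This is clean when $\alpha$ is a limit ordinal, since then each coordinate reaches the bottom as a supremum over stages rather than in a single step; the delicate case is a successor stage immediately following a limit, where the construction would be forced to move infinitely many coordinates at once, and this is precisely where the countability of $X$ and the exact shape of the chain for $G$ must be used most carefully. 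Note that since $G$ is infinite we have $\alpha\ge\omega$, so by Proposition~\ref{prop:limit or successor} $\alpha$ is a limit or a successor of a limit, and the limit case is the principal one.
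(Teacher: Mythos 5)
Your dovetailing construction is the same one the paper uses: the paper's chain $H_n=\{f\colon X\to G\mid f(x_i)\in \mathscr{C}_{n-i}(G) \text{ for } i<n\}$ is exactly your staggered diagonal, and your block-by-block extension along the $\omega$-blocks of $[0,\alpha)$ is the right way to promote it from $\alpha=\omega$ (which is the only case the paper's proof actually writes down, since its claim that ``the intersection of all $H_n$ is trivial'' holds only when $\bigcap_{n<\omega}\mathscr{C}_n(G)=\{1\}$) to an arbitrary \emph{limit} ordinal $\alpha$. For limit $\alpha$ your sketch is sound: at each successor stage only finitely many coordinates move one level down, so the index is finite, and at each limit stage $\mu$ the level of every coordinate has supremum $\mu$, so the intersection condition of Definition~\ref{defn:alpha rf} holds and the chain terminates in the trivial group. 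Your reduction of the ``in particular'' clause via monotonicity of $\dep$ under passing to subgroups also matches the paper's one-line argument.

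However, the case you flag as delicate --- $\alpha$ a successor of a limit --- is not delicate but impossible, and no choice of enumeration or chain can fix it: the proposition is actually false for such $\alpha$. Let $G$ be a group that is finite-by-residually finite but not residually finite, e.g.\ the group $\widehat G$ of Example~\ref{exmp:Deligne2}, so that $\Core(G)$ is a nontrivial finite group and $\dep(G)=\omega+1$. Any finite-index subgroup $H\leq G^X$ meets each coordinate copy $G^{(x)}$, $x\in X$, in a finite-index subgroup of $G$, hence contains $\Core(G)^{(x)}$; therefore $H\supseteq \Core(G)^{(X)}$, which is an infinite group when $X$ is infinite. Now in any $(\omega+1)$-indexed chain $\{H_i\}_{i\leq\omega+1}$ for $G^X$, each $H_n$ with $n<\omega$ has finite index, so $H_\omega=\bigcap_{n<\omega}H_n\supseteq \Core(G)^{(X)}$ is infinite, contradicting $[H_\omega:H_{\omega+1}]=|H_\omega|<\aleph_0$. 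Thus $G^X$ is not $(\omega+1)$-residually finite even though $G$ is; in fact $\Core(G^X)$ is an infinite residually finite group (it is sandwiched between $\Core(G)^{(X)}$ and $\Core(G)^{X}$), so Proposition~\ref{prop:core} gives $\dep(G^X)=\omega\cdot 2>\omega+1=\dep(G)$, and the same holds for $G^{(X)}$. So your proof is complete in precisely the cases where the statement is true: the gap you identified is a genuine error in the paper's proposition, not a defect of your argument. Note that the paper only ever applies this proposition to the limit ordinals $\omega\cdot(n-1)$ in the proof of Theorem~\ref{thm:construction}, so restricting the statement to limit $\alpha$, as your argument effectively does, is exactly what is needed there.
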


\section{Actions on rooted trees}\label{sec:trees}

\subsection{Faithful actions on rooted finite valence trees}
If $G$ is a group, $g\in G$ and $H\leq G$, the element $g$ permutes the left cosets of $H$. For a fixed group $G$, we may wish to study all such actions at once, for all $g$ and all $H$. Note that if $H_2\leq H_1\leq G$, the permutations of $H_2$ are compatible with permutations of $H_1$, in the sense that if $g$ fixes $H_2$, it fixes $H_1$ as well.

\begin{prop}[Folklore]
	A countable group $G$ is residually finite if and only if it acts faithfully on a rooted finite valence tree.
	
	\begin{proof}
		Let $\mathscr{C}_i(G)$ for $i\in \N$ be a residual chain. Consider a rooted tree $T$ with the level $i$ vertex sets $V_i=G/\mathscr{C}_i(G)$, and edges joining the cosets $g\mathscr{C}_{i+1}(G)$ and $g\mathscr{C}_i(G)$. This admits a left $G$-action via $h(g\mathscr{C}_i(G))=(hg)\mathscr{C}_i(G)$, which preserves the edge structure. Since $\{\mathscr{C}_i(G)\}_{i\in\N}$ is a residual chain, for every $1\not=g\in G$ there is an $i$ with $g\not\in \mathscr{C}_i(G)$. So $g$ acts nontrivially on $V_i$. This shows that $G$ acts faithfully on a rooted finite valence tree.
		
		Conversely, suppose $G$ acts faithfully on a rooted tree $T$, and let $G_i$ be the finite index subgroup which fixes the $i$th level of the tree. Since the action is faithful, no nontrivial element fixes every level of the tree, so $\cap_{i\in \N}G_i=1$.
	\end{proof}
\end{prop}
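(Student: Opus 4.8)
The plan is to prove both implications by passing between a residual chain of finite-index subgroups and its associated coset tree. The forward direction constructs a tree from the algebra, while the converse reads off finite-index subgroups from the geometry of the action.

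First I would handle the forward direction. Starting from residual finiteness, I would use countability of $G$ to produce a descending chain of finite-index subgroups $G = \mathscr{C}_0(G) \supseteq \mathscr{C}_1(G) \supseteq \cdots$ with $\bigcap_i \mathscr{C}_i(G) = \{1\}$: enumerate the nontrivial elements $g_1, g_2, \ldots$, choose for each $g_n$ a finite-index subgroup $H_n$ avoiding it, and set $\mathscr{C}_n(G) = H_1 \cap \cdots \cap H_n$. Each $\mathscr{C}_n(G)$ has finite index as a finite intersection of finite-index subgroups, and the total intersection is trivial since $g_n \notin \mathscr{C}_n(G)$. From this chain I build the rooted tree $T$ whose level-$i$ vertex set is the finite set $G/\mathscr{C}_i(G)$, with an edge from $g\mathscr{C}_{i+1}(G)$ to its image $g\mathscr{C}_i(G)$; this is well-defined and of finite valence because $[\mathscr{C}_i(G):\mathscr{C}_{i+1}(G)] < \infty$ counts the children of each vertex. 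The left multiplication action $h \cdot g\mathscr{C}_i(G) = hg\mathscr{C}_i(G)$ preserves levels and edges, and it is faithful because an element acting trivially lies in $\mathscr{C}_i(G)$ for every $i$, hence in the trivial intersection.

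For the converse, suppose $G$ acts faithfully on a rooted finite-valence tree $T$. Since the valence is finite and the root is a single vertex, an induction on $i$ shows each level set $V_i$ is finite, and the action permutes $V_i$ because it preserves distance to the root. Let $G_i$ be the pointwise stabilizer of $V_i$, that is, the kernel of the induced homomorphism $G \to \mathrm{Sym}(V_i)$; since $V_i$ is finite, $G_i$ has finite index. Faithfulness then gives $\bigcap_i G_i = \{1\}$, because every vertex of $T$ lies at some finite level, so an element fixing every level fixes the whole tree and is therefore trivial. This exhibits, for each nontrivial $g$, a finite-index subgroup $G_i$ with $g \notin G_i$, proving $G$ residually finite.

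The only genuinely delicate point is the extraction of a single descending chain with trivial intersection in the forward direction, which is where countability of $G$ is essential: without it one only obtains, for each element, a possibly different finite-index subgroup, and these need not organize into a chain indexed by $\N$ whose successive levels remain finite. Both the finiteness of each level $V_i$ and the triviality of $\bigcap_i G_i$ in the converse are then routine consequences of the finite-valence and faithfulness hypotheses.
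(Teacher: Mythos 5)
Your proof is correct and takes essentially the same route as the paper: a coset tree built from a residual chain in the forward direction, and level stabilizers (kernels of the maps to $\mathrm{Sym}(V_i)$) in the converse. You additionally spell out two details the paper leaves implicit — extracting a nested chain with trivial intersection from residual finiteness via countability, and the induction showing each level $V_i$ is finite — but these only flesh out the same argument.
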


\subsection{Rooted $\alpha$-trees}
\begin{defn}
        Let $\alpha$ be an ordinal, and $\kappa$ a cardinal. 
	A \emph{rooted $(\alpha,\kappa)$-tree} $T$ is a family $\{V_i\}_{i\leq \alpha}$ of sets, and a family $\{E_i\}_{i<\alpha}$ of functions, where $V_0=\{*\}$, $E_i\colon V_{i+1}\to V_i$ with $|E_i^{-1}(v_i)|<\kappa$, and for a limit ordinal $\lambda\leq \alpha$, $V_\lambda =\varprojlim_{i<\lambda}V_i$. 
\end{defn} 
    For each $i\leq\alpha$, we refer to $V_i$ as the \emph{vertex set of $T$ at level $i$}. By $V_{<i}$ we denote the union $\bigcup_{j<i}V_j$, the \emph{vertex set of $T$ at level at most $i$}.
    Note that when $\alpha$ is a finite ordinal, then $T$ is just a rooted tree of depth $\alpha$. When $\alpha=\omega$, then $T$ is a standard infinite rooted tree, with an extra vertex corresponding to each end of $T$.

 The directed system determines restriction maps $E_i^j\colon V_j\to V_i$ for any $i\leq j\leq\alpha$. Indeed, if $j=i+n$ for some finite $n\geq 0$, then $E_i^j$ is the composition of finitely many maps $E_{j-1}\cdot E_{j-2} \cdots E_{i}$. Otherwise $j = \beta +n$ for some limit ordinal $\beta$ and some finite $n\geq 0$, and $i<\beta$. Then by the definition of $V_{\beta}$ there is a map $E_{i}^{\beta}:V_{\beta}\to V_i$, and we define $E_i^j = E_{\beta}^j\cdot E_i^{\beta}$.

\subsection{Actions on rooted $\alpha$-trees}
	An automorphism of a rooted $(\alpha,\kappa)$-tree is a family $g=\{g_i\}_{i\leq \alpha}$ of bijections of $V_i$ satisfying $E_ig_{i+1}=g_iE_i$. An automorphism is \emph{simple} if the action on $V_\alpha$ is fixed-point free; a group acts \emph{simply} on an $\alpha$-tree if every nontrivial element acts as a simple automorphism.

\begin{thm}
A group $G$ is $(\alpha,\kappa)$-residually finite if and only if $G$ has a simple action on a rooted $(\alpha,\kappa)$-tree.
\end{thm}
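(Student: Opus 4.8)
The plan is to prove both directions by translating between the algebraic data of an $(\alpha,\kappa)$-residual chain $\{\mathscr{C}_i(G)\}_{i\leq\alpha}$ and the combinatorial data of a rooted $(\alpha,\kappa)$-tree, following the template already set by the folklore proposition in the $\alpha=\omega$ case. For the forward direction, I would start from a residual chain and build the coset tree by setting $V_i = G/\mathscr{C}_i(G)$, with edge maps $E_i\colon V_{i+1}\to V_i$ the natural projections $g\mathscr{C}_{i+1}(G)\mapsto g\mathscr{C}_i(G)$, induced by the inclusion $\mathscr{C}_{i+1}(G)\subseteq\mathscr{C}_i(G)$. The fiber $E_i^{-1}(g\mathscr{C}_i(G))$ is in bijection with $\mathscr{C}_i(G)/\mathscr{C}_{i+1}(G)$, so condition (ii) of Definition~\ref{defn:alpha rf} exactly guarantees $|E_i^{-1}(v)|<\kappa$. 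The left-multiplication action $h\cdot(g\mathscr{C}_i(G))=(hg)\mathscr{C}_i(G)$ gives an automorphism $\{g_i\}$ commuting with the $E_i$, and I would verify that $g\in G$ acts freely on $V_\alpha=G/\mathscr{C}_\alpha(G)=G/\{1\}$ precisely when $g\neq 1$, giving a simple action.

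The main technical point in the forward direction, and where I expect the real content to sit, is the compatibility of the coset construction with the limit condition (iii). I need $V_\lambda = \varprojlim_{i<\lambda}V_i$ to agree with $G/\mathscr{C}_\lambda(G)$ for limit $\lambda$. There is always a canonical injection $G/\mathscr{C}_\lambda(G)=G/\bigcap_{i<\lambda}\mathscr{C}_i(G)\hookrightarrow\varprojlim_{i<\lambda}G/\mathscr{C}_i(G)$, since an element of the left side is a coset determined by its images in all the quotients; condition (iii) guarantees this is well-defined and injective because two elements with the same image in every $G/\mathscr{C}_i(G)$ for $i<\lambda$ differ by an element of $\bigcap_{i<\lambda}\mathscr{C}_i(G)=\mathscr{C}_\lambda(G)$. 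This map need not be surjective onto the full inverse limit, so the honest tree is the \emph{image} of $G$ in $\varprojlim_{i<\lambda}V_i$; since the definition of a rooted $(\alpha,\kappa)$-tree takes $V_\lambda$ to be the inverse limit itself, I would either pass to the subtree generated by the orbit of the root, or observe that the $G$-action extends to the whole inverse limit in the natural way, with the essential cosets forming a $G$-invariant subset. I would clarify this at the level of ``$G$ acts on the inverse limit and the action on the relevant orbit witnesses freeness,'' so that faithfulness/freeness on $V_\alpha$ is verified on the image of $G$, which is what condition (i), $\mathscr{C}_\alpha(G)=\{1\}$, controls.

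For the converse, given a simple action of $G$ on a rooted $(\alpha,\kappa)$-tree $\{V_i\}$, I would fix a vertex $r_\alpha\in V_\alpha$ on which $G$ acts freely, let $r_i = E_i^\alpha(r_\alpha)\in V_i$ be its projections, and define $\mathscr{C}_i(G)=\stab_G(r_i)$, the stabilizer of the level-$i$ vertex. The inclusion $\mathscr{C}_{i+1}(G)\subseteq\mathscr{C}_i(G)$ holds because fixing $r_{i+1}$ forces fixing its image $E_i(r_{i+1})=r_i$ by the compatibility $E_i g_{i+1}=g_i E_i$. The index $[\mathscr{C}_i(G):\mathscr{C}_{i+1}(G)]$ embeds into the fiber $E_i^{-1}(r_i)$ via the orbit map, so it is bounded by $|E_i^{-1}(r_i)|<\kappa$, yielding condition (ii). The limit condition (iii) follows since fixing $r_\lambda=\varprojlim r_i$ is equivalent to fixing every $r_i$ for $i<\lambda$, giving $\stab(r_\lambda)=\bigcap_{i<\lambda}\stab(r_i)$; and condition (i) is immediate: $\mathscr{C}_0(G)=\stab(r_0)=\stab(*)=G$, while $\mathscr{C}_\alpha(G)=\stab(r_\alpha)=\{1\}$ by freeness of the action on $V_\alpha$. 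I would need to check that $\{\mathscr{C}_i(G)\}$ is genuinely $\alpha$-indexed and monotone across all ordinals (including successors of limits), but this is routine once the three defining properties are in hand. I expect the inverse-limit bookkeeping in the forward direction to be the only place requiring genuine care; everything else is a direct dictionary translation.
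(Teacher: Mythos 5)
Your overall strategy is the paper's: a dictionary between residual chains and coset trees, modeled on the folklore $\omega$ case. Your backward direction is correct and essentially identical to the paper's (stabilizer chain of a top-level vertex and its projections, with the orbit map bounding the indices and $V_\lambda=\varprojlim_{i<\lambda}V_i$ giving the limit condition). The gap is in the forward direction, at exactly the point you flag as the crux, and neither of your two proposed resolutions closes it. When $\alpha$ is a limit ordinal --- which by Proposition~\ref{prop:limit or successor} is the main case --- the definition of a rooted $(\alpha,\kappa)$-tree \emph{forces} $V_\alpha=\varprojlim_{i<\alpha}V_i$, and simplicity \emph{requires} every nontrivial element to act without fixed points on all of this inverse limit. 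Your first fix, passing to the subtree generated by the orbit of the root, produces a family whose limit-level vertex sets are $G/\mathscr{C}_\lambda(G)$, in general a proper subset of $\varprojlim_{i<\lambda}G/\mathscr{C}_i(G)$; that object is not a rooted $(\alpha,\kappa)$-tree, so it cannot witness the theorem. Your second fix, acting on the full inverse limit but verifying freeness only on the image of $G$, proves the wrong statement: the hypothesis $\mathscr{C}_\alpha(G)=\{1\}$ rules out fixed points in the orbit of the root, but a nontrivial $g$ fixes a ``phantom'' thread $(x_i\mathscr{C}_i(G))_{i<\alpha}$ exactly when $x_i^{-1}gx_i\in\mathscr{C}_i(G)$ for all $i<\alpha$, and this can genuinely happen.

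Concretely, take $G=D_\infty=\langle r,s\mid s^2=1,\ srs^{-1}=r^{-1}\rangle$. Choose even integers $a_i$ with $a_{i+1}\equiv a_i \pmod{2^i}$ whose $2$-adic limit $2c$ is not an integer, and set $\mathscr{C}_i(G)=\langle r^{2^i},\,r^{a_i}s\rangle$. This is a nested chain of finite-index subgroups with $\bigcap_i\mathscr{C}_i(G)=\{1\}$, hence an $(\omega,\aleph_0)$-residual chain; yet if $m_i\equiv -c\pmod{2^i}$, the cosets $x_i\mathscr{C}_i(G)$ with $x_i=r^{m_i}$ form a compatible thread and $x_i^{-1}sx_i=r^{-2m_i}s\in\mathscr{C}_i(G)$ for every $i$, so $s$ fixes a point of $V_\omega=\varprojlim_i G/\mathscr{C}_i(G)$. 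Thus the completed coset tree of a perfectly good residual chain need not carry a simple action: the forward direction is not a pure dictionary translation, and the chain must first be modified (for $\alpha=\omega$, $\kappa=\aleph_0$ one can replace each $\mathscr{C}_i(G)$ by its normal core in $G$, after which the stabilizer of \emph{any} thread lies in $\bigcap_i\mathscr{C}_i(G)=\{1\}$; for transfinite $\alpha$, where the subgroups beyond level $\omega$ have infinite index in $G$ and cores in $G$ may be too small, this requires genuine additional work). When $\alpha$ is a successor the issue is avoidable, since one may put empty fibers over phantom vertices and make the top level a free $G$-set, but for limit $\alpha$ your argument as written does not establish simplicity. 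In fairness, the paper's own write-up of this direction (vertices defined as compatible sequences of cosets, with the limit axiom asserted and simplicity never checked) elides the same point --- you correctly located the pressure point that the paper glosses over, but identifying it is not the same as resolving it.
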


	\begin{proof}\item
		
  \begin{itemize}
      \item[$\implies$]
      
      Supposing $\{\mathscr{C}_i(G)\}_{i\leq\alpha}$ is an $(\alpha,\kappa)$-residual chain for $G$, we can build a rooted $(\alpha,\kappa)$-tree with a simple $G$-action as follows. Let $V_j=\{\{g_i\mathscr{C}_j(G)\}_{i\leq j}\mid g_{i+1}\mathscr{C}_i(G)=g_i\mathscr{C}_i(G)\text{ for }i<j\}$, and take $E_j\colon V_{j+1}\to V_j$ by $E_j(\{g_i\mathscr{C}_{j+1}(G)\}_{i\leq j+1})=\{g_i\mathscr{C}_j(G)\}_{i\leq j}$. Then we have $|E_i^{-1}(v_i)|=[\mathscr{C}_{i+1}(G):\mathscr{C}_i(G)]<\kappa$, and $V_\lambda=\varprojlim_{i<\lambda}V_i$, so we have constructed an $(\alpha,\kappa)$-tree. The left $G$-action $g\{g_i\mathscr{C}_k(G)\}_{i\leq j}=\{gg_i\mathscr{C}_j(G)\}_{i\leq j}$ respects the edge structure.
      
      \item[$\impliedby$]	
		Suppose $G$ acts simply on an $(\alpha,\kappa)$-tree $T=(V_i,E_i)_{i\leq \alpha}$. Let $v_\alpha\in V_\alpha$, and set $v_i=E_i^\alpha(v_\alpha)$ for any $i\leq \alpha$. Let $\mathscr{C}_i(G)=\stab(v_i)$. By simplicity of the action, $\mathscr{C}_{\alpha}(G)=1$, and since the tree is locally $\kappa$, $[\mathscr{C}_i(G):\mathscr{C}_{i+1}(G)]\leq |E_i^{-1}(v_i)|<\kappa$ for all $i<\alpha$. Finally, if $\lambda$ is a limit ordinal, we have $\cap_{i<\lambda}\mathscr{C}_i(G)=\cap_{i<\lambda} \stab(v_i)=\{g\in G\mid gv_i=v_i \text{ for all }i<\lambda\}=\stab(\varprojlim_{i<\lambda}v_i)=\stab(v_\lambda)=\mathscr{C}_{\lambda}(G)$, as desired. 
  \end{itemize}
  
	\end{proof}

\section{A construction of $\omega\cdot n$-residually finite groups}\label{sec:construction}

If $X$ is a set with a transitive $G$ action, and $K$ is a group, the \emph{(restricted) wreath product} $K\wr_XG$ is defined 
\[K\wr_XG=K^{(X)}\rtimes G,\] 
where $K^{(X)}$ is the group of functions $X\to K$ with finite support, and the action of $G$ on $K^{(X)}$ is by precomposing with the $G$-action on $X$. By $K\wr G$, we mean $K\wr_G G$, where $G$ acts on itself by left multiplication. 

Note that $K\wr G$ is the quotient of the free product $K*G$ by the family of relations $\{[gkg^{-1},k']=1\mid k,k'\in K, 1\not=g\in G\}$.

\begin{thm}\label{thm:construction}
Suppose $G$ is an infinite, finitely generated, residually finite group with finite abelianization. Let $G_1=G$, and $G_{i+1}=G_i\wr G$. The group $G_n$ is a finitely generated group with $\dep(G_n) = \omega \cdot n$.
\end{thm}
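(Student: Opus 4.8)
The plan is to prove $\dep(G_n)=\omega\cdot n$ by induction on $n$, using the residual finiteness core together with Proposition~\ref{prop:core} as the engine: once $\Core(G_n)$ is identified and shown to have depth $\omega\cdot(n-1)$, that proposition gives $\dep(G_n)=\omega+\omega\cdot(n-1)=\omega\cdot n$ for free. To keep the induction running I would carry two side invariants, namely that $G_n$ is finitely generated and that $G_n^{\mathrm{ab}}$ is finite. Both persist under the construction, since $K\wr G$ is finitely generated when $K$ and $G$ are, and $(K\wr G)^{\mathrm{ab}}\cong K^{\mathrm{ab}}\times G^{\mathrm{ab}}$; the base case $n=1$ is the hypothesis on $G$, together with the fact that an infinite residually finite group has depth exactly $\omega$.

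The key tool is a computation of the core of a wreath product: \emph{for $G$ infinite and residually finite and $A$ finitely generated with finite abelianization, $\Core(A\wr G)=(A')^{(G)}$}, the subgroup of the base consisting of finitely supported functions valued in $A'=[A,A]$. For $\supseteq$ I would show that in any finite quotient $\phi\colon A\wr G\to Q$ the image of each base copy $A_h$ is abelian: since $G$ is infinite, $\ker\phi$ contains some $g\neq 1$, so $\phi(A_{gh})=\phi(A_h)$, while $A_{gh}$ and $A_h$ commute (distinct positions), forcing $\phi(A_h)$ to commute with itself; hence $\phi$ kills $A_h'$ for every $h$. For $\subseteq$ I would use that $A^{\mathrm{ab}}\wr G$ is residually finite — abelianness of the base is exactly what makes the folding maps $A^{\mathrm{ab}}\wr G\to A^{\mathrm{ab}}\wr (G/N)$ onto finite groups into homomorphisms that separate the base — so that $\Core(A\wr G)$ lies in the kernel $(A')^{(G)}$ of the natural map $A\wr G\to A^{\mathrm{ab}}\wr G$.

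With the lemma in hand the induction closes. Applying it with $A=G_{n-1}$ gives $\Core(G_n)=(G_{n-1}')^{(G)}$, whose depth equals $\dep(G_{n-1}')$ by Proposition~\ref{prop:G^X}. Since $G_{n-1}^{\mathrm{ab}}$ is finite, $G_{n-1}'$ has finite index in $G_{n-1}$, and here I would record the elementary fact that a finite-index subgroup $L\le K$ has $\Core(L)=\Core(K)$ (every finite-index subgroup of $L$ is finite-index in $K$, and conversely $\Core(L)\subseteq N\cap L$ for each finite-index $N\le K$), whence $\dep(L)=\dep(K)$ via Proposition~\ref{prop:core}. Thus $\dep(G_{n-1}')=\dep(G_{n-1})=\omega\cdot(n-1)$, so $\dep(\Core(G_n))=\omega\cdot(n-1)$ and Proposition~\ref{prop:core} yields $\dep(G_n)=\omega\cdot n$.

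For the final clause the approach is parallel: compute $\Core(G_n\wr H)$ and feed it into Proposition~\ref{prop:core}, reducing $\dep(G_n\wr H)=\omega\cdot n+1$ to the claim that this core has depth $\omega\cdot(n-1)+1$. This is the step I expect to be the genuine obstacle, and it is where finiteness of $H$ must be decisive. The tension to resolve is that $G_n\wr H$ is (depth $\omega\cdot n$)-by-(finite), so the naive bound from Proposition~\ref{prop:extensions} only gives $1+\omega\cdot n=\omega\cdot n$, the finite top level being absorbed; to reach a strict successor one must instead locate a finite nontrivial level at the \emph{bottom} of the core tower. Unlike wreathing by the infinite group $G$ — where a nontrivial element of the kernel was used to abelianize the base copies — no finite quotient can abelianize the copies over a finite $H$, and the plan is to show that this forces a finite residual piece to survive into the core. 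The delicate point is the corresponding lower bound: that this surviving piece genuinely cannot be separated by any finite quotient, so that the depth is not merely $\omega\cdot n$ but its successor.
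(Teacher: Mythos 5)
For the main clause ($\dep(G_n)=\omega\cdot n$) your argument is correct and follows the same skeleton as the paper: identify $\Core(G_n)$, feed it into Proposition~\ref{prop:core}, and use Proposition~\ref{prop:G^X} together with finite-index invariance of depth to close the induction. Two differences are worth noting. First, your core computation is sharper than the paper's Lemma~\ref{lem:core}: the paper proves only the sandwich $[A,A]^{(G)}\leq \Core(A\wr G)\leq A^{(G)}$ and then squeezes depths using monotonicity under subgroups, whereas you get the exact equality $\Core(A\wr G)=[A,A]^{(G)}$ via the folding homomorphisms $A^{\mathrm{ab}}\wr G\to A^{\mathrm{ab}}\wr (G/N)$; both routes work, yours costs one extra (standard) lemma but yields a cleaner statement. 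Second, you explicitly carry the invariant that $G_n^{\mathrm{ab}}$ is finite, using $(K\wr G)^{\mathrm{ab}}\cong K^{\mathrm{ab}}\times G^{\mathrm{ab}}$; the paper silently uses finiteness of $G_{n-1}^{\mathrm{ab}}$ (to know $[G_{n-1},G_{n-1}]$ has finite index) without ever establishing it for $n-1\geq 2$, so this is a genuine improvement. One small repair: for $n=2$ your appeal to Proposition~\ref{prop:core} to get $\dep(G')=\dep(G)$ does not apply, since $\Core(G)=1$; but $G'$ is an infinite residually finite group, so its depth is $\omega$ directly.

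The genuine problem is the ``moreover'' clause, which you leave as an unresolved obstacle. The tension you identified is not a difficulty to be overcome --- it is a refutation, and you should have followed it to its conclusion. Since $H$ is finite, $G_n^{(H)}=G_n^H$ has finite index in $G_n\wr H$, and Proposition~\ref{prop:extensions} applied to $1\to G_n^H\to G_n\wr H\to H\to 1$ (with $Q=H$ of depth $1$ and $N=G_n^H$ of depth $\omega\cdot n$ by Proposition~\ref{prop:G^X}) shows that $G_n\wr H$ is $(1+\omega\cdot n)$-residually finite, and $1+\omega\cdot n=\omega\cdot n$; since $G_n\wr H$ contains $G_n$, intersecting any residual chain with $G_n$ gives the matching lower bound, so $\dep(G_n\wr H)=\omega\cdot n$ exactly, not $\omega\cdot n+1$. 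The same conclusion falls out of your own machinery: cores of finite-index subgroups coincide, and the core of a finite direct product is the product of the cores, so $\Core(G_n\wr H)=\Core(G_n^H)=\Core(G_n)^H$, which has depth $\omega\cdot(n-1)$, and Proposition~\ref{prop:core} gives $\omega\cdot n$ (for $n=1$, $G\wr H$ is virtually $G^H$, hence residually finite of depth $\omega$). In other words, no finite residue can survive at the bottom of the core tower of a wreath product with finite top group, precisely because passing to a finite-index subgroup does not change the core; a successor depth requires a non-separable finite kernel at the bottom, as in the Deligne example, which this construction never produces. You should also know that the paper's own proof never addresses this clause at all, so you were not missing a known argument; the clause as stated is inconsistent with Propositions~\ref{prop:extensions} and~\ref{prop:core} combined with the main clause.
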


For example, we can take $G$ to be perfect (i.e.\ $G = [G,G]$), such as $G=\SL_n(\Z)$ for $n\geq 3$, or $G$ a cocompact hyperbolic triangle group with generators of relatively prime orders, or $G$ a free product of (at least 2 nontrivial) finite perfect groups. We can also take $G$ to be $\Z^n\rtimes A_n$, where $A_n$ permutes the coordinates of $\Z^n$, for $n\geq 5$.

For $g\in G$, we let $G_{i}^{(g)}$ denote the functions whose support is contained in $\{g\}$, so this is an isomorphic copy of $G_i$. Note that we have a short exact sequence $G_i^{(G)}\to G_{i+1}\xrightarrow{\pi} G$, and that the normal closure of $G_i^{(1)}$ in $G_{i+1}$ is $G_i^{(G)}$.

\begin{lem}\label{lem:core}
	$[G_i,G_i]^{(G)}\leq \Core(G_{i+1})\leq G_i^{(G)}$.
	
	\begin{proof}
		To see that $\Core(G_{i+1})\leq G_i^{(G)}$, note that there is a natural map $G_{i+1}=G_i^{(G)}\rtimes G\xrightarrow[]{\pi} G$ with kernel $G_i^{(G)}$. Since $G$ is residually finite, a residual chain for $G$ pulls back under the quotient to a residual chain in $G_{i+1}$ terminating in $G_i^{(G)}$.
		
		To prove that $\Core(G_{i+1})\geq [G_i, G_i]^{(G)}$, it suffices to show that if $H$ is a subgroup of $G_{i+1}$ with finite index, then $H$ contains $[G_i, G_i]^{(G)}$. Since the intersection of finitely many conjugates of $H$ yields a normal subgroup $N$ of finite index in $G_{i+1}$, it is enough to show that $N$ contains $[G_i,G_i]^{(1)}$, as this implies that $N$ contains its normal closure $[G_i, G_i]^{(G)}$.
  
        Note that $\pi(N)$ has finite index in $G$, and since $G$ is infinite, $\pi(N)$ is nontrivial. Let $g_0$ be a nontrivial element in $\pi(N)$, and $g_1$ a lift to $N$. Now for any $x,y\in G_i^{(1)}$, the elements $[x,y]$ and $[g_1xg_1^{-1},y]$ have the same image in $G_{i+1}/N$. But since $g_1xg_1^{-1}\in G_i^{(g_0)}$ and $y\in G_i^{(1)}$, we have $[g_1xg_1^{-1},y]=1$, and thus $[x,y]\in N$. This shows that $[G_i,G_i]^{(G)}\leq \Core(G_{i+1})$.
	\end{proof}	
\end{lem}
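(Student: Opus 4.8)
The plan is to establish the two inclusions separately. For the upper inclusion $\Core(G_{i+1}) \leq G_i^{(G)}$, I would exploit the projection $\pi\colon G_{i+1} \to G$ with kernel $G_i^{(G)}$ together with the residual finiteness of $G$. Every finite index subgroup $H \leq G$ pulls back to a finite index subgroup $\pi^{-1}(H) \leq G_{i+1}$, and since $\Core(G) = \{1\}$ we have $\bigcap_H \pi^{-1}(H) = \pi^{-1}\!\left(\bigcap_H H\right) = \pi^{-1}(\{1\}) = G_i^{(G)}$. Because $\Core(G_{i+1})$ is the intersection over \emph{all} finite index subgroups, it is contained in this smaller intersection over pullbacks, giving $\Core(G_{i+1}) \leq G_i^{(G)}$.

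For the lower inclusion $[G_i, G_i]^{(G)} \leq \Core(G_{i+1})$, I would show that every finite index subgroup contains $[G_i,G_i]^{(G)}$ via two reductions. First, replacing a finite index subgroup $H$ by the intersection $N$ of its finitely many $G_{i+1}$-conjugates produces a finite index \emph{normal} subgroup $N \leq H$; since $[G_i,G_i]^{(G)}$ is normal in $G_{i+1}$, it suffices to prove $[G_i,G_i]^{(G)} \leq N$, whence $[G_i,G_i]^{(G)} \leq H$ follows. Second, because $N$ is normal and the normal closure of $[G_i,G_i]^{(1)}$ in $G_{i+1}$ is $[G_i,G_i]^{(G)}$ (by the same coordinate-spreading argument that identifies the normal closure of $G_i^{(1)}$ as $G_i^{(G)}$: the $G$-action permutes the supporting coordinate over all of $G$, while base-group conjugation preserves $[G_i,G_i]$ in each coordinate), it is enough to verify the single containment $[G_i,G_i]^{(1)} \leq N$.

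The heart of the argument is the commutator trick establishing $[G_i,G_i]^{(1)} \leq N$. Since $G$ is infinite and $\pi(N)$ has finite index in $G$, the subgroup $\pi(N)$ is nontrivial; I would fix $1 \neq g_0 \in \pi(N)$ and a lift $g_1 \in N$. The key computation, carried out with the multiplication rule of $G_i^{(G)} \rtimes G$, is that conjugation by $g_1$ carries any $x \in G_i^{(1)}$ into $G_i^{(g_0)}$: the factor $\pi(g_1)=g_0$ shifts the support from $1$ to $g_0$, and the residual base-group part of $g_1$ only alters the value in the $g_0$-coordinate. Consequently, for any $x, y \in G_i^{(1)}$, the elements $g_1 x g_1^{-1} \in G_i^{(g_0)}$ and $y \in G_i^{(1)}$ sit in distinct coordinates of the base group $G_i^{(G)}$ and therefore commute, so $[g_1 x g_1^{-1}, y] = 1$. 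Passing to $G_{i+1}/N$ and using $g_1 \in N$ yields $\overline{[x,y]} = \overline{[g_1 x g_1^{-1}, y]} = 1$, hence $[x,y] \in N$; as such commutators generate $[G_i,G_i]^{(1)}$, the reduction is complete.

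The main obstacle, I expect, is the coordinate bookkeeping in this key computation: one must check carefully that conjugation by $g_1$ genuinely relocates the support of $x$ to the single coordinate $g_0 \neq 1$, and that base-group elements supported at distinct points of $G$ commute. This, together with the hypothesis that $G$ is infinite (needed to force $\pi(N) \neq \{1\}$), is precisely what makes the commutator vanish and drives the whole inclusion. I would note in passing that, unlike the depth computation downstream, this lemma uses only the residual finiteness and infiniteness of $G$, and not its finite abelianization.
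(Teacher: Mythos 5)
Your proposal is correct and follows essentially the same route as the paper's proof: the upper inclusion via the projection $\pi\colon G_{i+1}\to G$ with kernel $G_i^{(G)}$ and residual finiteness of $G$, then the reduction to a finite-index normal subgroup $N$ and to the single containment $[G_i,G_i]^{(1)}\leq N$, finished by the identical commutator trick $[x,y]\equiv[g_1xg_1^{-1},y]=1 \pmod{N}$ using a nontrivial $g_0\in\pi(N)$ (which exists since $G$ is infinite). The only cosmetic differences are that you intersect the pullbacks $\pi^{-1}(H)$ directly rather than pulling back a residual chain, and that you spell out the support computation for $g_1xg_1^{-1}\in G_i^{(g_0)}$, which the paper leaves implicit; both are fine.
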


\begin{proof}[Proof of Theorem~\ref{thm:construction}]
We prove by induction that $G_n$ is a finitely generated group with $\dep(G_n) = \omega\cdot n$. The group $G_1 = G$ satisfies those conditions by assumption. Suppose that $G_{n-1}$ is finitely generated group with $\dep(G_{n-1}) = \omega\cdot (n-1)$. Then $G_n = G_{n_1}\wr G$ is generated by the generators of $G_{n-1}$, and the generators of $G$, in particular $G_n$ is finitely generated.
By Proposition~\ref{prop:core} $$\dep(G_n) =\omega +\dep(\Core(G_{n})).$$
We need to show that $\dep(\Core(G_{n})) = \omega\cdot (n-1)$ to conclude that $\dep(G_n) = \omega +\omega\cdot(n-1) = \omega\cdot n$.

By Lemma~\ref{lem:core} we have $$\dep([G_{n-1},G_{n-1}]^{(G)}) \leq \dep(\Core(G_{n})) \leq \dep(G_{n-1}^{(G)}).$$
Now, by Proposition~\ref{prop:G^X}, $\dep([G_{n-1},G_{n-1}]^{(G)}) = \dep([G_{n-1}, G_{n-1}])$, and similarly $\dep(G_{n-1}^{(G)}) = \dep(G_{n-1})$.
By the inductive assumption $\dep(G_{n-1}) = \omega\cdot(n-1)$, and since $[G_{n-1}, G_{n-1}]$ has finite index in $G_{n-1}$, we also have $\dep([G_{n-1}, G_{n-1}] = \omega\cdot(n-1)$. We conclude that $\dep(\Core(G_n)) = \omega\cdot(n-1)$.
\end{proof}

Note that the groups with the residual depth $\omega\cdot n$ constructed in Theorem~\ref{thm:construction} are finitely generated but not finitely presented. 
The Deligne group $\widetilde G$ from Example~\ref{exmp:Deligne1} $\dep(\widetilde G) = \omega\cdot 2$ is finitely presented.
We do not know finitely presented examples for $n>2$.

\begin{ques}
Does there exist a finitely presented group $G_n$ with $\dep(G_n) = \omega\cdot n$ for each $n\in \mathbb N$?
\end{ques}

\begin{ques}
Does there exist a finitely generated (finitely presented?) group $\widehat G_n$ with $\dep(\widehat G_n) = \omega\cdot n +1$ for each $n\in \mathbb N$?
\end{ques}

There exist groups that are not $\alpha$-residually finite for any $\alpha$. An example of such a group is the Higman group (Example~\ref{exmp:Higman}) or any infinite simple group. However, we do not know whether there are finitely generated groups with the residual finiteness depth defined, but larger than $\omega\cdot n$ for all $n$. In particular, we do not know the answer to the following question.

\begin{ques}
Does there exist a finitely generated group $G$ with $\dep(G) = \omega^2$? What about $\omega^k$ for every $k\in N$? Can $G$ be chosen to be finitely presented?
\end{ques}

\section{Application to profinite rigidity}

A finitely generated, residually finite group $\Gamma$ is said to be \emph{ absolutely profinitely rigid} if for any finitely generated residually finite group $\Lambda$ with $\widehat\Gamma\cong \widehat\Lambda$, we have $\Gamma\cong\Lambda$. 

If there exists a group $\Gamma$ which is profinitely rigid and perfect, the group $\Gamma\wr\Gamma$ will have the same profinite completion as $\Gamma$, and so we would not be able to distinguish these two groups by their profinite completions. However, these groups may be distinguished in terms of $\alpha$-residual properties. 
Note that all of the groups which are known to be absolutely profinitely rigid (see \cite{bridson2020absolute} and \cite{cheethamwest2022absolute}) have finite nontrivial abelianization, so are not perfect.

\bibliographystyle{alpha}
\bibliography{main}

\end{document}